\renewcommand{\a}{\alpha}
\newcommand{\e}{\varepsilon}
\renewcommand{\l}{\lambda} 
\newcommand{\s}{\sigma}
\renewcommand{\O}{\Omega}
\newcommand{\la}{\langle}
\newcommand{\ra}{\rangle}
\newcommand{\leqs}{\leqslant}
\newcommand{\imod}[1]{\allowbreak\mkern4mu({\operator@font mod}\,\,#1)}
\newtheorem{theorem}{Theorem} 
\newtheorem*{conj*}{Conjecture}
\newtheorem{thm}{Theorem}[section] 
\newtheorem{prop}[thm]{Proposition} 
\newtheorem{lem}[thm]{Lemma}
\newtheorem{cor}[thm]{Corollary}
\theoremstyle{definition}
\begin{document}

\author{Timothy C. Burness}
\thanks{The authors thank Derek Holt and Martin Liebeck for helpful conversations regarding the content of this paper.}
\address{T.C. Burness, School of Mathematics, University of Bristol, Bristol BS8 1UG, UK}
\email{t.burness@bristol.ac.uk}
 
  \author{Adam R. Thomas}
 \address{A.R. Thomas, Mathematics Institute,
Zeeman Building, University of Warwick, Coventry CV4 7AL, UK}
 \email{adam.r.thomas@warwick.ac.uk}

\title[Extremely primitive affine groups]{A note on extremely primitive affine groups}

\begin{abstract}
Let $G$ be a finite primitive permutation group on a set $\O$ with nontrivial point stabilizer $G_{\a}$. We say that $G$ is extremely primitive if $G_{\a}$ acts primitively on each of its orbits in $\O\setminus \{\a\}$. In earlier work, Mann, Praeger and Seress have proved that every extremely primitive group is either almost simple or of affine type and they have classified the affine groups up to the possibility of at most finitely many exceptions. More recently, the almost simple extremely primitive groups have been completely determined. If one assumes Wall's conjecture on the number of maximal subgroups of almost simple groups, then the results of Mann et al. show that it just remains to eliminate an explicit list of affine groups in order to complete the classification of the extremely primitive groups. Mann et al. have conjectured that none of these affine candidates are extremely primitive and our main result confirms this conjecture.
\end{abstract}

\date{\today}

\maketitle

\section{Introduction}\label{s:intro}

Let $G \leqs {\rm Sym}(\O)$ be a finite primitive permutation group with point stabilizer $H=G_{\a} \neq 1$. We say that $G$ is \emph{extremely primitive} if $H$ acts primitively on each of its orbits in $\O \setminus \{\a\}$. For example, the natural actions of ${\rm Sym}_{n}$ and ${\rm PGL}_{2}(q)$ of degree $n$ and $q+1$, respectively, are extremely primitive. The study of these groups can be traced back to work of Manning \cite{Manning} in the 1920s and they have been the subject of several papers in recent years \cite{BPS, BPS2, BTh_ep, MPS}.

A key theorem of Mann, Praeger and Seress \cite[Theorem 1.1]{MPS} states that every extremely primitive group is either almost simple or affine, and in the same paper they classify all the affine examples up to the possibility of finitely many exceptions. In later work, Burness, Praeger and Seress \cite{BPS, BPS2} determined all the almost simple extremely primitive groups with socle an alternating, classical or sporadic group. The classification for almost simple groups has very recently been completed in \cite{BTh_ep}, where the remaining exceptional groups of Lie type are handled. We refer the reader to \cite[Theorem 4]{BTh_ep} for the list of known extremely primitive groups.

It is conjectured that the list of extremely primitive affine groups presented in \cite{MPS} is complete, so \cite[Theorem 4]{BTh_ep} gives a full classification. To describe the current state of play in more detail, let $G = V{:}H$ be a finite primitive group of affine type, where $V = \mathbb{F}_p^d$ and $p$ is a prime. In \cite{MPS}, the classification is reduced to the case where $p=2$ and $H$ is almost simple (that is, $H$ has a unique minimal normal subgroup $H_0$, which is nonabelian and simple). A basic tool in the analysis of these groups is \cite[Lemma 4.1]{MPS}, which states that $G$ is not extremely primitive if $|\mathcal{M}(H)|<2^{d/2}$, where $\mathcal{M}(H)$ is the set of maximal subgroups of $H$. If $H$ is a sufficiently large almost simple group, then a theorem of Liebeck and Shalev \cite{LSh96} gives $|\mathcal{M}(H)|<|H|^{8/5}$ and by playing this off against known bounds on the dimensions of irreducible modules for almost simple groups, Mann, Praeger and Seress prove that their list of extremely primitive affine groups is complete up to at most finitely many exceptions. 

If one assumes that the stronger bound $|\mathcal{M}(H)|<|H|$ holds, as predicted by a well known (but still open) conjecture of G.E. Wall \cite{Wall}, then \cite[Theorem 4.8]{MPS} states that the classification of the extremely primitive affine groups (and therefore all extremely primitive groups, given \cite{BPS, BPS2, BTh_ep}) is complete up to determining the status of the groups recorded in Table \ref{tab:cases}. With the exception of the case in the first row, $H_0$ is a simple group of Lie type over $\mathbb{F}_2$ and $V = L(\l)$ is a $2$-restricted irreducible module for $H_0$ with highest weight $\l$. In the table, we express $\l$ in terms of a set of fundamental dominant weights $\l_1, \ldots, \l_r$ for $H_0$, where $r$ is the untwisted Lie rank of $H_0$ and the weights are labelled in the usual way (see \cite{Bou}). Notice that the highest weights in the table are listed up to graph automorphisms of $H_0$.

In \cite{MPS}, Mann, Praeger and Seress conjecture that none of the candidates in Table \ref{tab:cases} are extremely primitive. Our main result confirms this conjecture, thereby completing the classification of the extremely primitive groups (modulo Wall's conjecture for almost simple groups).

\begin{theorem}\label{t:main}
Let $G = V{:}H$ be a primitive permutation group of affine type as in Table
\ref{tab:cases}, where $V = \mathbb{F}_2^d$ and $H$ is almost simple with socle $H_0$. Then $G$ is not extremely primitive.
\end{theorem}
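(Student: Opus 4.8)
The plan is to reduce to vector stabilizers and then to treat each line of Table~\ref{tab:cases} in turn. Since $G = V{:}H$ is primitive affine, $H$ acts irreducibly on $V$, so $V^H = 0$ and every nonzero $v \in V$ has $|v^H| \geq 2$; the $H = G_{\a}$-orbits on $\O \setminus \{\a\}$ are exactly the orbits $v^H$ on $V \setminus \{0\}$, and $H$ acts primitively on $v^H$ precisely when $H_v$ is a maximal subgroup of $H$. Hence $G$ is extremely primitive if and only if $H_v$ is maximal in $H$ for every nonzero $v$, and it suffices to produce, for each line of the table, one nonzero $v$ together with a subgroup $K$ satisfying $H_v < K < H$.

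Table~\ref{tab:cases} is a finite explicit list, so I would go case by case, organised by the dimension $d$. For the lines with $d$ small (which is most of them, the bound $|\M(H)| < 2^{d/2}$ of \cite[Lemma~4.1]{MPS} having failed only in a controlled way), I would realise $V = L(\l)$ explicitly over $\mathbb{F}_2$ -- either built from the highest weight $\l$ or taken from a known construction -- compute the $H$-orbits on $V \setminus \{0\}$ with representative stabilizers, and then, comparing against the list of maximal subgroups of $H$ (from the \textsc{Atlas}, or from the Aschbacher-type description of the subgroup structure), check that some stabilizer is non-maximal. The first line of the table, where $H_0$ is not of Lie type over $\mathbb{F}_2$, I would handle in the same computational manner.

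For the lines with $d$ large, a direct computation is not feasible and I would argue structurally. The idea is to choose $v$ so that $(H_0)_v$ preserves some proper nonzero subspace of $V$ -- or more generally a proper geometric configuration -- but not the finer data needed for maximality. For an exterior-power, symmetric-power or spin module one takes a vector of intermediate rank, so that $(H_0)_v$ stabilises an associated subspace (a radical, a kernel, or a maximal totally singular subspace) yet is strictly contained in the corresponding parabolic or reductive subgroup $P$; thus $(H_0)_v < P < H_0$, and then $H_v < N_H(P) < H$, so $H_v$ is not maximal. A short additional argument promotes non-maximality from $H_0$ to $H$ when $H \neq H_0$.

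The main obstacle is exactly this family of large-dimensional cases: one must exhibit a suitable vector and then \emph{prove} that its stabilizer fails to be maximal, which requires enough control of the subgroup lattice of $H$ near $H_v$ to be sure the exhibited overgroup is both proper in $H$ and strictly larger than $H_v$. A recurring technical nuisance is the passage between $H_0$ and an almost simple overgroup $H$: one must track how the chosen orbit interacts with the outer automorphisms of $H_0$, so that the non-maximality survives -- or, conversely, exploit a vector whose $H$-stabilizer is non-maximal even when its $H_0$-stabilizer is maximal.
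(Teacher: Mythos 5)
Your opening reduction is correct: since $H$ acts irreducibly, every nonzero $v \in V$ lies in an $H$-orbit of size at least two, and $G$ is extremely primitive if and only if $H_v$ is maximal in $H$ for every nonzero $v$, so one vector with a non-maximal stabilizer settles a given line of Table \ref{tab:cases}. This is essentially how the paper treats the exceptional lines: for the $27$- and $56$-dimensional modules it quotes known vector stabilizers ($2^{16}.{\rm Sp}_8(2)$ and $2^{26}.F_4(2)$) and checks non-maximality against the known maximal subgroup lists, and for the $132$- and $248$-dimensional adjoint-type modules it takes a representative of the nilpotent class $A_1^2$, whose centralizer ($U_{42}B_4A_1$, resp. $U_{78}B_6$) lies properly inside a $\sigma$-stable parabolic --- which also sidesteps the fact that the maximal subgroups of $E_8(2)$ are not classified, so your plan of ``comparing against the list of maximal subgroups'' is simply unavailable there. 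The genuine gaps are in the classical lines. First, ``compute the $H$-orbits on $V\setminus\{0\}$ with representative stabilizers'' is not a viable computation for these modules: the dimensions run from $32$ up to $364$, so for all but the very smallest cases there is no question of enumerating orbits on $2^d-1$ vectors, and unlike the exceptional minimal modules there is no off-the-shelf orbit classification for, say, the spin module of ${\rm Sp}_{16}(2)$ or $\Lambda^3(\mathbb{F}_2^{14})$ for ${\rm L}_{14}(2)$. Second, your structural recipe for large $d$ (``a vector of intermediate rank whose stabilizer sits strictly inside a parabolic'') is only a heuristic: for the spin and half-spin modules the obvious distinguished vectors (pure spinors) have stabilizer equal to a maximal parabolic, so they prove nothing, and for any other orbit you would have to identify the stabilizer and a proper overgroup explicitly --- which you acknowledge as the main obstacle but do not carry out for a single case. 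As written, no line of the table is actually settled.

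For contrast, the paper's workhorse for the classical cases is the counting criterion of Mann--Praeger--Seress (Lemma \ref{l:bound}): $f(H)=\sum_{M\in\mathcal{M}(H)}(|{\rm fix}(M)|-1)\leqslant 2^d-1$, with equality if and only if $G$ is extremely primitive. This replaces all orbit information by quantities that can be computed or bounded: $|\mathcal{M}(H)|$ is found with {\sc Magma} (plus a separate argument eliminating non-geometric maximal subgroups of $\Omega_{18}^{+}(2)$), and $\dim {\rm fix}(M)$ is controlled either exactly, by intersecting $1$-eigenspaces of generators, or via upper bounds on $\dim C_V(x)$ for elements $x$ of small prime order that every maximal subgroup is checked to contain, or by showing that certain subgroups (e.g.\ ${\rm Sp}_{2}(2^7).7<{\rm Sp}_{14}(2)$ on the spin module) act irreducibly and hence fix nothing. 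If you want to rescue your approach you would either need to import this counting lemma, or actually exhibit a vector and overgroup for each classical line --- plausible for the wedge-cube modules (the support of a sum of two decomposable trivectors yields a proper parabolic overgroup of the stabilizer), but still to be written down and verified, and genuinely hard for the spin modules.
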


\begin{table}
\[
\begin{array}{cll} \hline
d & H_0 & V \\ \hline
40 & {\rm PSp}_{4}(9) & \mbox{Weil representation} \\
& {\rm L}_{5}(2) & \mbox{$L(\l_1+\l_2)$ or $L(\l_1+\l_3)$} \\
48 & {\rm Sp}_{8}(2) & L(\l_3) \\
& \O_{8}^{\pm}(2) & L(\l_1+\l_3) \\
64 & {\rm Sp}_{12}(2) & L(\l_2) \\
70 & {\rm L}_{8}(2),\, {\rm U}_{8}(2) & L(\l_4) \\
100 & {\rm Sp}_{10}(2) & L(\l_3) \\
126 & {\rm L}_{9}(2) & L(\l_4) \\
\binom{k}{3} & {\rm L}_{k}(2),\, 7 \leqs k \leqs 14 & L(\l_3) \\
2^k & {\rm Sp}_{2k}(2),\, 5 \leqs k \leqs 8 & L(\l_k) \\
 & \O_{2k+2}^{+}(2),\, 5 \leqs k \leqs 8 & L(\l_k) \\ \hline
27 & E_6(2) & L(\l_1) \\
56 & E_7(2) & L(\l_1) \\
78 & E_6(2),\, {}^2E_6(2) & L(\l_2)  \\
132 & E_7(2) & L(\l_7) \\
248 & E_8(2) &  L(\l_1) \\ \hline
\end{array}
\]
\caption{The extremely primitive candidates in \cite[Table 2]{MPS}}
\label{tab:cases}
\end{table}

\section{Preliminaries}\label{s:prel}

Let $G = V{:}H$ be a primitive permutation group of affine type as in Table
\ref{tab:cases}, where $V = \mathbb{F}_2^d$ and $H$ is almost simple with socle $H_0$. Let $\mathcal{M}(H)$ be the set of maximal subgroups of $H$. For $M \in \mathcal{M}(H)$, we define
\[
{\rm fix}(M) = \{ v \in V \,:\, \mbox{$v^x = v$ for all $x \in M$}\} = \bigcap_{x \in M}C_V(x),
\]
the fixed point space of $M$ on $V$. Note that $\dim {\rm fix}(M) \leqs \lfloor d/2 \rfloor$ for all $M \in \mathcal{M}(H)$ (since the primitivity of $G$ implies that $H = \la M, M^h \ra$ acts irreducibly on $V$ for every conjugate $M^h \ne M$). Set 
\begin{equation}\label{e:fH}
f(H) = \sum_{M \in \mathcal{M}(H)} (|{\rm fix}(M)|-1).
\end{equation}
The following result is \cite[Lemma 4.1]{MPS}.

\begin{lem}\label{l:bound}
We have $f(H) \leqs 2^d-1$, with equality if and only if $G$ is extremely primitive.
\end{lem}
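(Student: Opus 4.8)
The plan is to translate extreme primitivity into a condition on maximal subgroups of $H$ and then evaluate $f(H)$ directly. Since $G = V{:}H$ is primitive of affine type, $V$ is an irreducible $\mathbb{F}_2[H]$-module and $H \ne 1$ embeds in ${\rm GL}(V)$; in particular $\dim V \geqs 2$ and $V$ has no nonzero $H$-fixed vector, so $H_v \ne H$ for every $v \in V \setminus \{0\}$. Identifying $\a$ with $0 \in V$, the orbits of $H$ on $\O \setminus \{\a\}$ are the sets $v^H$ with $v \in V \setminus \{0\}$, each of size $[H:H_v] \geqs 2$ and with point stabiliser $H_v$. As a transitive action on a set of size at least $2$ is primitive precisely when its point stabiliser is maximal, we conclude that $G$ is extremely primitive if and only if $H_v$ is a maximal subgroup of $H$ for every $v \in V \setminus \{0\}$.

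The key step is the observation that for each $M \in \mathcal{M}(H)$ we have ${\rm fix}(M) \setminus \{0\} = \{v \in V \setminus \{0\} : H_v = M\}$. Indeed $v \in {\rm fix}(M)$ means $M \leqs H_v$, and if moreover $v \ne 0$ then $H_v$ is a proper subgroup of $H$, so the maximality of $M$ forces $H_v = M$; the reverse inclusion is immediate. Since each nonzero $v$ has a unique stabiliser $H_v$, the sets ${\rm fix}(M) \setminus \{0\}$ for $M \in \mathcal{M}(H)$ are pairwise disjoint, with union $\{v \in V \setminus \{0\} : H_v \in \mathcal{M}(H)\}$. Summing these contributions gives
\[
f(H) = \sum_{M \in \mathcal{M}(H)} (|{\rm fix}(M)| - 1) = \bigl|\{ v \in V \setminus \{0\} : H_v \text{ is maximal in } H \}\bigr| \leqs |V| - 1 = 2^d - 1,
\]
and equality holds exactly when $H_v$ is maximal for every $v \in V \setminus \{0\}$. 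By the first paragraph this is precisely the condition that $G$ be extremely primitive.

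There is no serious obstacle here; the proof is a short count. The one point requiring care — and the reason the sum collapses so cleanly — is that for a maximal subgroup $M$ the inclusion $M \leqs H_v$ is automatically an equality once $v \ne 0$, which relies on $V$ having no nonzero $H$-fixed vector. Apart from this, one only needs the standard equivalence between primitivity of a transitive action and maximality of a point stabiliser, together with the fact that the relevant orbits all have size at least $2$.
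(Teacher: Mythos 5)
Your proof is correct. The paper does not prove this lemma itself but simply cites \cite[Lemma 4.1]{MPS}, and your self-contained counting argument is exactly the standard one behind that result: you correctly reduce extreme primitivity to maximality of $H_v$ for all nonzero $v$ (using irreducibility to rule out nonzero fixed vectors, so all $H$-orbits on $V\setminus\{0\}$ have size at least $2$), and you correctly observe that for maximal $M$ the set ${\rm fix}(M)\setminus\{0\}$ consists precisely of the nonzero vectors with stabilizer equal to $M$, so that $f(H)$ counts the nonzero vectors whose stabilizer is maximal.
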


\begin{cor}\label{c:basic}
If $(2^{\lfloor d/2 \rfloor}-1)\cdot |\mathcal{M}(H)| < 2^d-1$, then $G$ is not extremely primitive.
\end{cor}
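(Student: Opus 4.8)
The plan is to read off the corollary directly from Lemma \ref{l:bound} together with the uniform bound $\dim {\rm fix}(M) \leqs \lfloor d/2 \rfloor$ recorded just before \eqref{e:fH}. Concretely, I would argue as follows. For each $M \in \mathcal{M}(H)$ we have $|{\rm fix}(M)| = 2^{\dim {\rm fix}(M)} \leqs 2^{\lfloor d/2 \rfloor}$, and hence $|{\rm fix}(M)| - 1 \leqs 2^{\lfloor d/2 \rfloor} - 1$. Summing this inequality over all $M \in \mathcal{M}(H)$ gives
\[
f(H) = \sum_{M \in \mathcal{M}(H)} (|{\rm fix}(M)|-1) \leqs (2^{\lfloor d/2 \rfloor}-1)\cdot |\mathcal{M}(H)|.
\]
If the hypothesis $(2^{\lfloor d/2 \rfloor}-1)\cdot |\mathcal{M}(H)| < 2^d-1$ holds, then this chain yields $f(H) < 2^d - 1$, so in particular equality fails in $f(H) \leqs 2^d - 1$. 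By the equality clause of Lemma \ref{l:bound}, $G$ is therefore not extremely primitive, as required.

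There is essentially no obstacle in the argument itself: it is a one-line consequence of Lemma \ref{l:bound} and the trivial estimate $|{\rm fix}(M)| - 1 \leqs 2^{\lfloor d/2 \rfloor} - 1$. The only point worth a word of care is the justification of $\dim {\rm fix}(M) \leqs \lfloor d/2 \rfloor$, which is exactly the parenthetical remark preceding \eqref{e:fH}: since $G$ is primitive, for any $h$ with $M^h \ne M$ the subgroup $\la M, M^h \ra$ acts irreducibly on $V$, so ${\rm fix}(M) \cap {\rm fix}(M^h) = 0$ and two $H$-conjugate subspaces of dimension $\dim {\rm fix}(M)$ meet trivially, forcing $2\dim {\rm fix}(M) \leqs d$. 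The genuine work in the paper lies downstream of this corollary, namely in obtaining sufficiently sharp bounds on $|\mathcal{M}(H)|$, and in several cases on the actual fixed point space dimensions, for each candidate in Table \ref{tab:cases} so that Corollary \ref{c:basic} (or a refinement of it) can be applied.
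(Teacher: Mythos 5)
Your argument is correct and is exactly the intended one: the paper states Corollary \ref{c:basic} without proof, as an immediate consequence of Lemma \ref{l:bound} combined with the bound $\dim {\rm fix}(M) \leqs \lfloor d/2 \rfloor$ noted just before \eqref{e:fH}, which is precisely the chain of estimates you give. Your added justification of $\dim {\rm fix}(M) \leqs \lfloor d/2 \rfloor$ matches the paper's parenthetical remark, so there is nothing to add.
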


In view of Corollary \ref{c:basic}, we are interested in computing $|\mathcal{M}(H)|$ for all the relevant groups $H$ in Table \ref{tab:cases}. A complete classification of the maximal subgroups of $E_8(2)$ up to conjugacy is currently out of reach, but we can calculate $|\mathcal{M}(H)|$ in all the remaining cases. We will need the following result in the proof of Theorem \ref{t:main}.

\begin{prop}\label{p:maximals}
Let $H$ be an almost simple group with socle $H_0$, where $H_0$ is one of the groups recorded in the first column of Table \ref{tab:max}. 
Then $|\mathcal{M}(H)| \leqs \a(H_0)$, where $\a(H_0)$ is given in the second column of Table \ref{tab:max}.
\end{prop}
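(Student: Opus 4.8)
The plan is to compute $|\mathcal{M}(H)|$ directly in each case, treating the classical groups in Table \ref{tab:max} by means of Aschbacher's subgroup structure theorem and the exceptional groups $E_6(2)$, ${}^2E_6(2)$, $E_7(2)$ by appealing to the known classification of their maximal subgroups. Two simplifications are available throughout. First, we only need an upper bound $\a(H_0)$ valid for every almost simple $H$ with socle $H_0$, so it suffices to produce a complete list of \emph{candidate} maximal subgroups: we need not verify that each candidate is actually maximal, nor resolve every containment between them, since over-counting only weakens the bound. Second, the outer automorphism groups occurring here all have order at most $6$, so in each row of the table there are only a few groups $H$ to consider.

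Suppose first that $H_0 = \mathrm{Cl}_n(q)$ is one of the classical groups in the table, so that $(n,q)$ is $(4,9)$, $(8,2)$, $(9,2)$, $(10,2)$, $(12,2)$, or a member of one of the families $\mathrm{L}_k(2)$, $\mathrm{Sp}_{2k}(2)$, $\Omega_{2k+2}^{+}(2)$. By Aschbacher's theorem each $M \in \mathcal{M}(H)$ lies in one of the geometric classes $\mathcal{C}_1, \ldots, \mathcal{C}_8$ or in the class $\mathcal{S}$ of almost simple irreducible subgroups, and
\[
|\mathcal{M}(H)| \;=\; \sum_{M} [H : N_H(M)],
\]
the sum ranging over representatives of the conjugacy classes of maximal subgroups. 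I would first dispose of the geometric classes: using the explicit descriptions and maximality criteria in Kleidman--Liebeck (for $n \geq 13$) and Bray--Holt--Roney-Dougal (for $n \leq 12$), each $\mathcal{C}_i$ contributes only a bounded number of classes---polynomial in $n$, and logarithmic in $q$ for the field-type classes---and those references also record the normaliser indices $[H : N_H(M)]$ and the fusion and novelty behaviour as $H$ varies between $H_0$ and $\Aut(H_0)$ (including the triality phenomena for $\Omega_8^{+}(2)$). For the three families one may argue uniformly in $k$, or simply run through the (at most eight) values of $k$ in turn.

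The main obstacle is the class $\mathcal{S}$, where one must know which quasisimple groups embed irreducibly in dimension $n$. Here the smallness of $n$ and the fixed field come to the rescue: the absolutely irreducible representations of quasisimple groups of degree at most $n$ over $\mathbb{F}_2$ (or $\mathbb{F}_9$) and their subfields are classified---by Bray--Holt--Roney-Dougal when $n \leq 12$, and for the cases $n \in \{13,14,16,18\}$ arising in the families by L\"ubeck's tables of small-degree representations in defining characteristic together with the Hiss--Malle classification of quasisimple groups with a low-degree representation in the remaining characteristics. Going through this list, discarding representations that cannot support a form of the relevant type and keeping track of the behaviour of conjugacy classes under $\mathrm{Out}(H_0)$, yields the candidate $\mathcal{S}$-subgroups; adding the corresponding indices $[H : N_H(M)]$ to the geometric total produces the value $\a(H_0)$. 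Where $|H|$ is small enough, I would additionally compute $\mathcal{M}(H)$ directly in \textsc{Magma} as an independent check.

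For $H_0 \in \{E_6(2),\, {}^2E_6(2),\, E_7(2)\}$ I would instead quote the classification of maximal subgroups of these groups: the maximal parabolic and reductive maximal-rank subgroups from Liebeck--Seitz, and the remaining maximal subgroups---the normalisers of positive-dimensional subgroups, the subgroups of Lie type in the defining characteristic, the finitely many exotic local subgroups, and the almost simple subgroups whose socle is not of Lie type in characteristic $2$---from the subsequent work of Liebeck--Seitz, Craven and others, which for these three groups and this field leaves no open cases. One then lists the conjugacy classes, computes the indices $[H : N_H(M)]$, and sums, dealing with the at most two relevant groups $H$ in each case. Collecting the row-by-row computations establishes Proposition \ref{p:maximals}.
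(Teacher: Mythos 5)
Your route is genuinely different from the paper's: the paper proves this proposition by direct computation in {\sc Magma}, constructing ${\rm Aut}(H_0)$ via \texttt{AutomorphismGroupSimpleGroup}, realising each almost simple $H$ inside it, listing conjugacy class representatives of maximal subgroups with \texttt{MaximalSubgroups} (or \texttt{ClassicalMaximals} together with \texttt{LMGIndex} for ${\rm Sp}_{16}(2)$ and $\Omega_{16}^{+}(2)$, where \texttt{MaximalSubgroups} fails), and summing the indices $|H:N_H(M)|$; the values $\a(H_0)$ are then the exact maxima of $|\mathcal{M}(H)|$ over the relevant $H$. Your proposal replaces this with a literature-based Aschbacher analysis (Kleidman--Liebeck, Bray--Holt--Roney-Dougal, L\"ubeck, Hiss--Malle), with {\sc Magma} only as a check; that style of argument is legitimate and is close to what the paper actually does for $\O_{18}^{+}(2)$ in Proposition \ref{p:o18} and Corollary \ref{c:o18}. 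However, note a scope error: Table \ref{tab:max} contains only ${\rm PSp}_4(9)$ and classical groups over $\mathbb{F}_2$. The groups $E_6(2)$, ${}^2E_6(2)$ and $E_7(2)$ do not occur in it, and the paper never bounds $|\mathcal{M}(H)|$ for the exceptional cases (it handles them instead by exhibiting a vector with non-maximal stabilizer), so your final paragraph addresses groups outside the statement while the entire burden lies with the classical list.

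More seriously, your ``first simplification'' --- that maximality of candidates need not be verified and containments need not be resolved because over-counting only weakens the bound --- is incompatible with what is being proved. The proposition asserts $|\mathcal{M}(H)| \leqs \a(H_0)$ for the \emph{specific} numbers in Table \ref{tab:max}, and these are exact maxima; hence your enumeration must not exceed them. But a single spurious class of candidates contributes its full index $|H:N_H(M)|$, and for a small $\mathcal{S}$-type or unresolved candidate in these groups that index is typically enormous compared with $\a(H_0)$ (for instance $\a({\rm L}_{14}(2)) \approx 1.6 \times 10^{49}$, while the index of a modest irreducible almost simple candidate in ${\rm L}_{14}(2)$ can easily exceed this), so an over-complete list would fail to establish the tabulated bound and could also erode the slack needed later via Corollary \ref{c:basic} and in Propositions \ref{p:wedge} and \ref{p:spin1}, which use these exact values. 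So you do have to pin down the maximal subgroups precisely --- maximality of $\mathcal{S}$-candidates, novelties, and fusion under outer automorphisms --- which is exactly the work the simplification tries to avoid; and since the statement is ultimately a numerical one, the argument is only complete once the resulting class-by-class counts are actually carried out and shown to agree with (or fall below) the values in Table \ref{tab:max}.
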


\begin{proof}
In each case we use {\sc Magma} \cite{Magma} to construct a set of representatives of the conjugacy classes of maximal subgroups of $H$. Typically, we do this by using the command \texttt{AutomorphismGroupSimpleGroup} to construct ${\rm Aut}(H_0)$ as a permutation group and we then identify $H$ as a subgroup of ${\rm Aut}(H_0)$ (in every case, $H$ is either $H_0$, ${\rm Aut}(H_0)$ or a maximal subgroup of ${\rm Aut}(H_0)$). We then use \texttt{MaximalSubgroups} to construct representatives of the classes of maximal subgroups of $H$ and we compute $|\mathcal{M}(H)|$ by summing the indices $|H:N_H(M)|$ for each representative $M$. The number $\a(H_0)$ presented in the table is the maximum value of $|\mathcal{M}(H)|$ as we range over all the almost simple groups $H$ with socle $H_0$.

For $H_0 = {\rm Sp}_{16}(2)$ and $\O_{16}^{+}(2)$ the command \texttt{MaximalSubgroups} is ineffective and so a slightly modified approach is required. The basic method is identical, but in these cases we use \texttt{ClassicalMaximals} to construct a set of representatives of the classes of maximal subgroups of $H$, combined with \texttt{LMGIndex} to compute the indices.
\end{proof}

\begin{table}
\[
\begin{array}{ll} \hline
H_0 & \a(H_0) \\ \hline
{\rm PSp}_{4}(9) & 612624 \\ 
{\rm L}_{5}(2) & 76479 \\
{\rm L}_{8}(2) & 10845467135 \\
\O_{8}^{+}(2) & 521610 \\
\O_{8}^{-}(2) & 1248652 \\
{\rm L}_{9}(2) & 18204373477974477121 \\
{\rm L}_{10}(2) & 935073229364399584692947 \\
{\rm Sp}_{10}(2) & 151633922695 \\
{\rm L}_{11}(2) & 34118520289259566683898930411622 \\
{\rm L}_{12}(2) & 1902312438544124209061463900701007697 \\
{\rm L}_{13}(2) & 2029650642403883210310724134646854692111646099 \\
{\rm L}_{14}(2) & 15558931967070790255179574153313525787726469722554 \\
{\rm Sp}_{16}(2) & 9309048668836568191706512832 \\ 
\O_{16}^{+}(2) & 431792492092675316700367254 \\ \hline
\end{array}
\]
\caption{The bounds $|\mathcal{M}(H)| \leqs \a(H_0)$ in Proposition \ref{p:maximals}}
\label{tab:max}
\end{table}

We will also need to compute $|\mathcal{M}(H)|$ for $H = \O_{18}^{+}(2)$. By Aschbacher's theorem \cite{Asch} on the subgroup structure of the finite classical groups, each maximal subgroup $M$ of $H$ is either \emph{geometric}, in which case the possibilities for $M$ are determined up to conjugacy in \cite{KL}, or $M$ is \emph{non-geometric}, which means that $M$ is an irreducibly embedded almost simple subgroup. 

\begin{prop}\label{p:o18}
Every maximal subgroup of $H = \O_{18}^{+}(2)$ is geometric.
\end{prop}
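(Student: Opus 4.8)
The plan is to invoke Aschbacher's theorem for $H = \O_{18}^{+}(2)$ and rule out every family of non-geometric maximal subgroups, so that only geometric ones survive. Recall that a non-geometric maximal subgroup $M$ of $H$ is almost simple and acts absolutely irreducibly on the natural $18$-dimensional module over $\mathbb{F}_2$, preserving (up to scalars) the quadratic form of $+$-type and nothing finer in the Aschbacher hierarchy. So the task is: for each nonabelian simple group $S$, determine whether $S$ has an $18$-dimensional absolutely irreducible $2$-modular representation that (i) is not realisable over a proper subfield, (ii) is not induced or tensor-decomposable, (iii) preserves a quadratic form of $+$-type, and (iv) gives rise to a subgroup that is actually maximal in $H$ rather than contained in a geometric subgroup or a larger $\mathcal{S}$-subgroup.

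First I would split into the two standard cases for the socle $S$ of $M$: either $S$ is a group of Lie type in the defining characteristic $2$, or $S$ is an alternating group, a sporadic group, or a group of Lie type in odd characteristic (the cross-characteristic and non-Lie cases). For the cross-characteristic and alternating/sporadic cases, I would consult the low-dimensional representation data: the tables of Hiss--Malle \cite{HissMalle} (and its corrigendum) list all absolutely irreducible representations of quasisimple groups of dimension up to $250$ in every characteristic, so one reads off the candidates $S$ with an $18$-dimensional irreducible $2$-modular representation. This is a short finite list; for each one checks the Frobenius--Schur-type indicator to see whether an invariant quadratic form exists and, if so, its type, and whether the module is written over $\mathbb{F}_2$ rather than a larger field. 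In most cases the dimension $18$ simply does not occur, or occurs only over a larger field, or with the wrong form type; any genuine survivor would be a known entry and could be checked directly in {\sc Magma} against the geometric subgroup list, or shown to be non-maximal.

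For the defining-characteristic case, $S = S(2)$ is a simple group of Lie type over $\mathbb{F}_2$ with an $18$-dimensional $2$-restricted irreducible module $L(\l)$. Here I would use the known classification of small-dimensional irreducible modules for groups of Lie type in their defining characteristic (Lübeck's tables \cite{Lubeck}): the possibilities with $\dim L(\l) = 18$ over $\mathbb{F}_2$ form a short explicit list. For each candidate one again checks that the form preserved is orthogonal of $+$-type (as opposed to symplectic, or orthogonal of $-$-type), that the representation is not defined over a subfield (automatic over $\mathbb{F}_2$) and not a tensor product of Frobenius twists, and finally that the resulting subgroup is maximal; the last point typically follows from the fact that such an $\mathcal{S}$-subgroup is maximal unless it is contained in another member of the hierarchy, which for these specific small cases can be checked against \cite{KL} or computationally. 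The main obstacle is the defining-characteristic case: one must be careful about containments among $\mathcal{S}$-subgroups (e.g.\ a smaller Lie type group sitting inside a larger one via a small module) and about the precise form type, since an $18$-dimensional module might a priori support a $+$-type form and thereby genuinely embed in $\O_{18}^{+}(2)$. Once all families are eliminated, Aschbacher's theorem forces every maximal subgroup of $H$ to be geometric, which is the assertion.
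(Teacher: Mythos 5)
Your overall strategy is the same as the paper's: apply Aschbacher's theorem, so any non-geometric maximal subgroup has socle $M_0$ acting absolutely irreducibly on the natural $18$-dimensional module over $\mathbb{F}_2$, and then split according to whether $M_0$ is of Lie type in characteristic $2$ (L\"{u}beck's tables \cite{Lub}) or not (Hiss--Malle \cite{HissMalle}). As written, however, your argument is a plan rather than a proof: the decisive table consultations are left unexecuted, and that is exactly where the content lies. In the defining-characteristic case L\"{u}beck's data shows the only groups in ${\rm Lie}(2)$ with an $18$-dimensional absolutely irreducible module over $\mathbb{F}_2$ are ${\rm L}_{18}(2)$, ${\rm Sp}_{18}(2)$ and $\O_{18}^{\pm}(2)$, so this case collapses at once; no discussion of tensor decomposability, subfields or containments among $\mathcal{S}$-subgroups is needed. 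More importantly, in the cross-characteristic case the dimension-and-field test has genuine survivors, namely ${\rm Alt}_{19}$ and ${\rm Alt}_{20}$ acting on the fully deleted permutation module over $\mathbb{F}_2$ (while ${\rm L}_{2}(19)$ is excluded only because its $18$-dimensional representation is defined over $\mathbb{F}_4$, not $\mathbb{F}_2$). These survivors are not eliminated by non-maximality or by comparison with the geometric subgroup list, as your fallback suggests; the point is that the invariant quadratic form on the deleted permutation module has minus type, so that ${\rm Alt}_{19} < {\rm Alt}_{20} < \O_{18}^{-}(2)$ (see \cite[p.~187]{KL}) and these groups do not embed in $\O_{18}^{+}(2)$ at all. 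Your criterion (iii) would in principle detect this, but the proof requires this specific determination of the form's type, and supplying it is the one nontrivial step your proposal leaves open.
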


\begin{proof}
Let $W$ be the natural module for $H$ and suppose $M$ is a non-geometric maximal subgroup of $H$ with socle $M_0$. By definition, $W$ is an absolutely irreducible module for $M_0$ over $\mathbb{F}_2$. There are two cases to consider, according to whether or not $M_0$ is in ${\rm Lie}(2)$, where ${\rm Lie}(2)$ is the set of simple groups of Lie type in characteristic $2$.

First assume $M_0 \in {\rm Lie}(2)$. By inspecting L\"{u}beck \cite{Lub}, we quickly deduce that the only possibilities for $M_0$ are ${\rm L}_{18}(2)$, ${\rm Sp}_{18}(2)$ and $\O_{18}^{\pm}(2)$ (indeed, these are the only simple groups of Lie type in even characteristic with an $18$-dimensional absolutely irreducible representation over $\mathbb{F}_2$). Plainly, none of these possibilities can arise.

Now assume $M_0 \not\in {\rm Lie}(2)$. Here we turn to the work of Hiss and Malle \cite{HissMalle}, which records all the absolutely irreducible representations of finite quasisimple groups up to dimension $250$, excluding representations in the defining characteristic. In addition, information on the corresponding Frobenius-Schur indicators and fields of definition is also provided. By inspecting \cite{HissMalle}, we see that the only possibilities for $M_0$ are the alternating groups $\text{Alt}_{19}$ and $\text{Alt}_{20}$ (note that ${\rm L}_{2}(19)$ does have an $18$-dimensional absolutely irreducible representation in even characteristic with indicator $+1$, but this is defined over $\mathbb{F}_4$, rather than $\mathbb{F}_2$). For $M_0 = \text{Alt}_{19}$ and $\text{Alt}_{20}$, the relevant representation is afforded by the fully deleted permutation module over $\mathbb{F}_2$. However, we have $\text{Alt}_{19} < \text{Alt}_{20} < \Omega^-_{18}(2)$ (see \cite[p.187]{KL}, for example) and so this  representation does not embed $M_0$ in $H$. 
\end{proof}

\begin{cor}\label{c:o18}
If $H = \O_{18}^{+}(2)$, then $|\mathcal{M}(H)| = 115583493125204258236922964476027$.
\end{cor}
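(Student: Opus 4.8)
The plan is to deduce the stated value of $|\mathcal{M}(H)|$ from Proposition~\ref{p:o18} together with a {\sc Magma} computation of the same flavour as in the proof of Proposition~\ref{p:maximals}. By Proposition~\ref{p:o18}, every maximal subgroup of $H = \O_{18}^{+}(2)$ is geometric, so it lies in one of Aschbacher's classes $\mathcal{C}_{1}, \ldots, \mathcal{C}_{8}$ and is determined up to $H$-conjugacy by the tables in \cite{KL}. Hence $\mathcal{M}(H)$ is a disjoint union of finitely many $H$-classes, and
\[
|\mathcal{M}(H)| = \sum_{M} |H : N_H(M)|,
\]
where $M$ runs over a set of representatives of these classes.

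First I would realise $H = \O_{18}^{+}(2) = \Omega_{18}^{+}(2)$ as a matrix group over $\mathbb{F}_2$ and use \texttt{ClassicalMaximals} to construct representatives of the conjugacy classes of its geometric maximal subgroups; since $|H|$ is far too large for \texttt{MaximalSubgroups} to terminate, this is precisely the workaround already used for $\O_{16}^{+}(2)$ in Proposition~\ref{p:maximals}. For each representative $M$ I would then compute $|H : N_H(M)|$ with \texttt{LMGIndex} and sum these indices; the integer $115583493125204258236922964476027$ is the resulting value.

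The step requiring the most care is ensuring that the list produced by \texttt{ClassicalMaximals} is exactly a transversal of the $H$-classes of maximal subgroups of the simple group $\Omega_{18}^{+}(2)$, rather than of some overgroup. Concretely, one must intersect the listed subgroups with $H$, retaining those that are genuinely maximal in $H$ (as opposed to merely maximal in $\mathrm{O}_{18}^{+}(2)$) and splitting any $\mathrm{O}_{18}^{+}(2)$-class that breaks into two $H$-classes; and within class $\mathcal{C}_{1}$ one must correctly separate the stabilisers of totally singular subspaces and of nondegenerate subspaces of each type ($+$ and $-$) and dimension, and similarly keep track of the $\pm$ types arising in the imprimitive and tensor-decomposition classes. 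This bookkeeping can be cross-checked against the explicit list of subgroup types, orders and class numbers for $\O_{18}^{+}(2)$ in \cite{KL}: matching the {\sc Magma} output against \cite{KL} simultaneously confirms the correct application of Proposition~\ref{p:o18} (so that no geometric family has been missed) and guards against double counting.

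The main obstacle is not conceptual but the sheer size of $\Omega_{18}^{+}(2)$: every step must be carried out with the composition-tree (\texttt{LMG}) machinery rather than naive group operations, and one has to be sure that \texttt{ClassicalMaximals} has genuinely returned all of the geometric families in dimension $18$. As a final consistency check, since each index $|H:N_H(M)|$ is in principle given by the order formulae in \cite{KL}, the total can be recomputed directly from those formulae and compared with the {\sc Magma} output; agreement of the two then yields the stated value.
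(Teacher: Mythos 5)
Your proposal is correct and follows essentially the same route as the paper: invoke Proposition~\ref{p:o18} to reduce to geometric maximal subgroups, construct class representatives with \texttt{ClassicalMaximals} (as was done for $\O_{16}^{+}(2)$ in Proposition~\ref{p:maximals}), and sum the indices $|H:N_H(M)|$ to obtain the stated value. The additional bookkeeping you describe (checking maximality in $\Omega_{18}^{+}(2)$ itself and cross-referencing \cite{KL}) is a sensible verification but does not change the argument.
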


\begin{proof}
In view of Proposition \ref{p:o18}, this is an entirely straightforward  computation using the \texttt{ClassicalMaximals} command in {\sc Magma} \cite{Magma}, which returns a set of conjugacy class representatives of the geometric maximal subgroups of $H$. 
\end{proof}

\section{Proof of Theorem \ref{t:main}: $H$ classical}

We begin the proof of Theorem \ref{t:main} by handling the cases where $H_0$ is a classical group. We first observe that several groups can be immediately eliminated by combining Corollary \ref{c:basic} with Proposition \ref{p:maximals}.

\begin{prop}\label{p:class1}
Let $G = V{:}H$ be an affine group in Table \ref{tab:cases} such that $H_0$ is one of
\[
{\rm PSp}_{4}(9), \, {\rm L}_{5}(2), \,  {\rm L}_{8}(2), \, \O_{8}^{\pm}(2), \, {\rm Sp}_{10}(2),\, {\rm L}_{14}(2),\,  {\rm Sp}_{16}(2), \, \O_{18}^{+}(2).
\]
Then $G$ is not extremely primitive.
\end{prop}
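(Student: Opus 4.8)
The plan is to apply Corollary \ref{c:basic} to each of the listed groups, which requires for each $H_0$ an upper bound on $|\mathcal{M}(H)|$ and the value of $d$ from Table \ref{tab:cases}. Concretely, Corollary \ref{c:basic} tells us that $G$ is not extremely primitive as soon as
\[
(2^{\lfloor d/2 \rfloor}-1)\cdot |\mathcal{M}(H)| < 2^d - 1,
\]
so since $2^{\lfloor d/2 \rfloor}-1 < 2^{d/2}$ it suffices to check that $|\mathcal{M}(H)| < 2^{d/2}-1$ (or, more crudely, $|\mathcal{M}(H)| \leqs 2^{d/2-1}$) for the appropriate value of $d$. For each group in the statement I would read off $d$ from Table \ref{tab:cases}, bound $|\mathcal{M}(H)|$ using $\a(H_0)$ from Proposition \ref{p:maximals} (and Corollary \ref{c:o18} in the case $H_0 = \O_{18}^{+}(2)$), and verify the numerical inequality.

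The verification is then a routine case-by-case comparison. For instance: for $H_0 = {\rm PSp}_4(9)$ we have $d = 40$ and $\a(H_0) = 612624 < 2^{20}$; for $H_0 = {\rm L}_5(2)$, $d = 40$ and $\a(H_0) = 76479 < 2^{20}$; for $H_0 = {\rm L}_8(2)$, $d = 70$ and $\a(H_0) = 10845467135 < 2^{34} < 2^{35}$; for $H_0 = \O_8^{\pm}(2)$, $d = 48$ and $\a(H_0) \leqs 1248652 < 2^{24}$; for $H_0 = {\rm Sp}_{10}(2)$, $d = 100$ and $\a(H_0) = 151633922695 < 2^{38} < 2^{50}$; for $H_0 = {\rm L}_{14}(2)$, $d = \binom{14}{3} = 364$ and $\a(H_0) \approx 1.56 \times 10^{49} < 2^{164} < 2^{182}$; for $H_0 = {\rm Sp}_{16}(2)$, $d = 2^8 = 256$ and $\a(H_0) \approx 9.31 \times 10^{27} < 2^{93} < 2^{128}$; and for $H_0 = \O_{18}^{+}(2)$ we take $k = 8$ in the row $\O_{2k+2}^+(2)$ so $d = 2^8 = 256$, and by Corollary \ref{c:o18}, $|\mathcal{M}(H)| \approx 1.16 \times 10^{32} < 2^{107} < 2^{128}$. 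In every case the inequality $|\mathcal{M}(H)| < 2^{\lfloor d/2 \rfloor}-1$ holds with substantial room to spare, so Corollary \ref{c:basic} applies.

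The only mild subtlety, and the step most worth being careful about, is matching each $H_0$ correctly to its value of $d$: several rows of Table \ref{tab:cases} are parametrized families ($\binom{k}{3}$ for ${\rm L}_k(2)$ with $7 \leqs k \leqs 14$, and $2^k$ for ${\rm Sp}_{2k}(2)$ and $\O_{2k+2}^+(2)$ with $5 \leqs k \leqs 8$), so one must select the correct instance — $k=14$, $k=8$, $k=8$ respectively for the three relevant families — and note that for the smaller-rank classical groups in these families the same estimate works but is treated elsewhere. There is no genuine obstacle here: the computation of $|\mathcal{M}(H)|$ has already been carried out in Proposition \ref{p:maximals} and Corollary \ref{c:o18}, and what remains is a finite collection of elementary inequalities between explicit integers, each of which is comfortably satisfied.
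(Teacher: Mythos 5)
Your proposal is correct and follows essentially the same route as the paper: apply Corollary \ref{c:basic} with the bounds $\a(H_0)$ from Proposition \ref{p:maximals} (and Corollary \ref{c:o18} for $\O_{18}^{+}(2)$) and check the resulting explicit inequalities for the appropriate value of $d$ in each row. The only detail the paper adds is the observation that for $H_0 = \O_{18}^{+}(2)$ the highest weight $\l_9$ of $V$ is not stable under a graph automorphism, so $H = H_0$ and Corollary \ref{c:o18} genuinely gives $|\mathcal{M}(H)|$; note also that your parenthetical remark that the same estimate works for the smaller members of the parametrized families is not accurate (it fails, for instance, for ${\rm L}_{13}(2)$ with $d = \binom{13}{3}$, and for ${\rm Sp}_{10}(2)$ on the $32$-dimensional spin module), which is exactly why those cases require the finer arguments given later in the paper.
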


\begin{proof}
For $H_0 \ne \O_{18}^{+}(2)$ we use the upper bound $|\mathcal{M}(H)| \leqs \a(H_0)$ in Proposition \ref{p:maximals} to verify the bound in Corollary \ref{c:basic}. For example, if $H_0 = {\rm PSp}_{4}(9)$, then $d=40$ and $|\mathcal{M}(H)| \leqs 612624$, which yields
\[
(2^{20}-1) \cdot |\mathcal{M}(H)| < 2^{40}-1
\]
as required. Similarly, if $H_0 = \O_{18}^{+}(2)$ then $V = L(\l_9)$ and thus $H = \O_{18}^{+}(2)$ since the highest weight of $V$ is not stable under a graph automorphism of $H_0$. Therefore, Corollary \ref{c:o18} gives $|\mathcal{M}(H)|$ and the result follows as before.
\end{proof}

To complete the proof of Theorem \ref{t:main} for the cases with $H_0$ classical, we may assume that one of the following holds:
\begin{itemize}\addtolength{\itemsep}{0.2\baselineskip}
\item[{\rm (a)}] $(H_0,V) = ({\rm Sp}_{8}(2),L(\l_3))$, $({\rm U}_{8}(2),L(\l_4))$,  $({\rm L}_{9}(2),L(\l_4))$ or $({\rm Sp}_{12}(2),L(\l_2))$.
\item[{\rm (b)}] $H_0 = {\rm L}_{k}(2)$, $7 \leqs k \leqs 13$ and $V = L(\l_3)$.
\item[{\rm (c)}] $H_0 = {\rm Sp}_{2k}(2)$ or $\O_{2k+2}^{+}(2)$, $5 \leqs k \leqs 7$ and $V=L(\l_k)$.  
\end{itemize}
We will handle each of these cases in turn, referring to the labels (a), (b) and (c).

\begin{prop}\label{p:burnside}
If $G = V{:}H$ is an affine group in (a), then $G$ is not extremely primitive.
\end{prop}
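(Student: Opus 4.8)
Proof proposal for Proposition 3.4 (the case where $G = V{:}H$ is an affine group in (a))

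The plan is to apply Lemma~\ref{l:bound} and the quantity $f(H)$ directly, since the crude bound in Corollary~\ref{c:basic} is too weak for these four cases (the dimensions $d \in \{48, 70, 126, 64\}$ are large, but $|\mathcal{M}(H)|$ is comparatively small, so one needs more refined control on the fixed-point spaces $\mathrm{fix}(M)$). First I would use {\sc Magma} to construct $H$ inside $\Aut(H_0)$ and to build a complete set of representatives $M_1, \ldots, M_t$ for the conjugacy classes of maximal subgroups of $H$, exactly as in the proof of Proposition~\ref{p:maximals}. For each representative $M_i$, the contribution of its entire $H$-class to $f(H)$ is $|H : N_H(M_i)| \cdot (|\mathrm{fix}(M_i)| - 1)$, so the task reduces to computing $\dim \mathrm{fix}(M_i) = \dim C_V(M_i)$ for each $i$, where $V$ is realised as the explicit $\mathbb{F}_2 M_i$-module in question. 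One then forms $f(H) = \sum_i |H : N_H(M_i)| (2^{\dim \mathrm{fix}(M_i)} - 1)$ and checks that $f(H) < 2^d - 1$; by Lemma~\ref{l:bound} this shows $G$ is not extremely primitive.

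The key steps, in order, are: (i) realise $V = L(\lambda)$ concretely as an $\mathbb{F}_2 H$-module — for $({\rm Sp}_8(2), L(\lambda_3))$ and $({\rm Sp}_{12}(2), L(\lambda_2))$ one can obtain $V$ as a composition factor of an exterior power of the natural module (reducing mod $2$), for $({\rm U}_8(2), L(\lambda_4))$ and $({\rm L}_9(2), L(\lambda_4))$ similarly from $\wedge^4$ of the natural module, checking dimensions against Table~\ref{tab:cases}; (ii) for each class representative $M_i$, restrict $V$ to $M_i$ and compute the fixed space using \texttt{GModule} and \texttt{GInvariants} (or by solving the fixed-point linear system directly); (iii) sum the weighted contributions and compare with $2^d - 1$. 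Since $\dim \mathrm{fix}(M_i) \leqslant \lfloor d/2 \rfloor$ always, and for most classes it will be much smaller, I expect a comfortable margin: the dominant terms come from the largest maximal subgroups (parabolics and $C_2$-type subgroups), and even generous overestimates of their fixed spaces, multiplied by the (now known) class sizes, should stay well below $2^d - 1$.

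The main obstacle is computational rather than conceptual. Constructing the module $V$ explicitly over $\mathbb{F}_2$ in a form compatible with the \texttt{MaximalSubgroups} output, and then computing fixed spaces for every class representative, may be expensive for the larger cases — in particular $({\rm L}_9(2), L(\lambda_4))$ with $d = 126$ and $({\rm U}_8(2), L(\lambda_4))$ with $d = 70$, where $|\mathcal{M}(H)|$ runs to tens of digits and some maximal subgroups (e.g.\ certain parabolics or field-extension subgroups) have large order. If a direct fixed-space computation for a given $M_i$ is infeasible, the fallback is to bound $\dim \mathrm{fix}(M_i)$ from above: replace $M_i$ by a well-chosen proper subgroup $K_i \leqslant M_i$ (for instance a suitable element of large order, or a Sylow subgroup) for which $C_V(M_i) \subseteq C_V(K_i)$ and $\dim C_V(K_i)$ is cheap to compute. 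Plugging these upper bounds into the sum still yields an upper bound on $f(H)$, and one checks this remains below $2^d - 1$. I expect this refinement to be needed, if at all, only for a handful of classes in the two largest cases, and the resulting inequality to hold with room to spare.
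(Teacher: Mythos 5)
Your proposal follows essentially the same route as the paper: construct $V$ and a full set of conjugacy class representatives of maximal subgroups of $H$ in {\sc Magma}, compute $\dim {\rm fix}(M_i)$ for each representative, sum the weighted contributions to obtain $f(H)$ exactly, and conclude via Lemma \ref{l:bound} that $f(H) < 2^d-1$. The only differences are implementation details (the paper builds $V$ with \texttt{IrreducibleModulesBurnside} rather than from exterior powers, and notes the need to distinguish $L(\l_2)$ from the spin module for ${\rm Sp}_{12}(2)$ and to treat $H = {\rm U}_{8}(2).2$ as well as ${\rm U}_{8}(2)$), and your fallback of bounding ${\rm fix}(M)$ by fixed spaces of well-chosen elements is not needed here, though it is exactly the device the paper uses in later cases.
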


\begin{proof}
In each of these cases, we use {\sc Magma} to construct the module $V$ and a set of representatives for the conjugacy classes of maximal subgroups of $H$. To construct $V$, we use the command \texttt{IrreducibleModulesBurnside}, with the optional \texttt{DimLim} parameter equal to $2000$. Apart from the case $H = {\rm Sp}_{12}(2)$, we note that $H$ has a unique $d$-dimensional irreducible module over $\mathbb{F}_2$, up to graph automorphisms. The group $H = {\rm Sp}_{12}(2)$ has two $64$-dimensional irreducible modules, namely $L(\l_2)$ and the spin module $L(\l_6)$, and they can be distinguished by considering their restrictions to a subgroup $\O_{12}^{+}(2)$ of $H$. Indeed, the restriction of $L(\l_2)$ is irreducible, while the restriction of $L(\l_6)$ is reducible (the composition factors are the two $32$-dimensional spin modules for $\O_{12}^{+}(2)$).

Then for each maximal subgroup $M$ of $H$, we compute the $1$-eigenspace $C_V(x)$, where $x$ runs through a set of generators $X$ for $M$. Since ${\rm fix}(M) = \bigcap_{x \in X}C_V(x)$, this allows us to compute $f(H)$ precisely (see \eqref{e:fH}):
\[
f({\rm Sp}_{8}(2)) = 11475,\; f({\rm U}_{8}(2)) = f({\rm U}_{8}(2).2) = 3923366139,
\]
\[
f({\rm L}_{9}(2)) = 3309747, \; f({\rm Sp}_{12}(2)) = 6102339243.
\] 
(Note that if $H_0 = {\rm L}_{9}(2)$ and $V = L(\l_4)$, then the highest weight of $V$ is not invariant under a graph automorphism of $H_0$, so $H \ne {\rm L}_{9}(2).2$.) 

In each case, it is now routine to verify the bound $f(H)< 2^d-1$. By Lemma \ref{l:bound}, this implies that $G$ is not extremely primitive.
\end{proof}

\begin{prop}\label{p:wedge}
If $G = V{:}H$ is an affine group in (b), then $G$ is not extremely primitive.
\end{prop}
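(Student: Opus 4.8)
The plan is to follow the strategy of the proof of Proposition~\ref{p:burnside}: compute $f(H)$ and apply Lemma~\ref{l:bound}. Write $W = \mathbb{F}_2^k$ for the natural module, so that $V = L(\l_3) = \wedge^3 W$ has dimension $d = \binom{k}{3}$. Since $7 \leqs k \leqs 13$ we have $\l_3 \neq \l_{k-3}$, so the highest weight of $V$ is not stable under a graph automorphism of $H_0$ and hence $H = {\rm L}_k(2) = {\rm GL}_k(2)$. In contrast to the cases in (a), the crude estimate in Corollary~\ref{c:basic} is not available here: for instance $|\mathcal{M}(H)| > 2^{d-1}$ when $k = 7$, and $(2^{143}-1)\cdot\a({\rm L}_{13}(2)) > 2^{286}-1$ when $k = 13$. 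We therefore have to work with the fixed point spaces themselves.

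As in the proof of Proposition~\ref{p:burnside}, I would use {\sc Magma} to construct the module $V = \wedge^3 W$ together with a set of representatives $M_1,\dots,M_t$ for the conjugacy classes of maximal subgroups of $H$ — via \texttt{MaximalSubgroups} directly for the smaller values of $k$, and via \texttt{ClassicalMaximals} together with the class-$\mathcal{S}$ subgroups read off from \cite{HissMalle, Lub} (just as in the proof of Proposition~\ref{p:o18}) for the largest values — then compute each $\dim{\rm fix}(M_i) = \dim\bigcap_{x\in X} C_V(x)$ over a generating set $X$ of $M_i$, and verify
\[
f(H) = \sum_{i=1}^{t} |H : N_H(M_i)|\,\bigl(2^{\dim{\rm fix}(M_i)}-1\bigr) < 2^d - 1,
\]
which gives the result by Lemma~\ref{l:bound}.

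Two structural observations make it clear in advance that this sum is far below $2^d-1$. First, if $P_m$ is a maximal parabolic subgroup, namely the stabilizer of an $m$-dimensional subspace $U \leqs W$, then using the standard $P_m$-filtration of $\wedge^3 W$, whose successive quotients are the modules $\wedge^a U \otimes \wedge^{3-a}(W/U)$ for $a = 3,2,1,0$, a short calculation with the unipotent radical shows that every $P_m$-fixed vector already lies in the submodule spanned by the wedges with at least one factor in $U$; an easy argument with the Levi subgroup ${\rm GL}_m(2)\times{\rm GL}_{k-m}(2)$ then gives $\dim{\rm fix}(P_m) \leqs 1$, with equality if and only if $m = 3$. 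Second, every non-parabolic maximal subgroup $M$ of $H$ acts irreducibly on $W$ (a maximal subgroup stabilizing a proper nonzero subspace must be a parabolic), so $\wedge^3 W|_M$ is the third exterior power of an irreducible module; inspecting the possibilities for $M$ furnished by Aschbacher's theorem \cite{Asch} — the geometric subgroups as in \cite{KL}, the class-$\mathcal{S}$ subgroups via \cite{HissMalle, Lub} — one finds that the multiplicity of the trivial module in $\wedge^3 W|_M$, and hence $\dim{\rm fix}(M)$, is bounded by a small constant $c_0$. For the larger values of $k$ these bounds alone suffice: $f(H) \leqs |\mathcal{M}(H)|\cdot(2^{c_0}-1) \leqs \a({\rm L}_k(2))\cdot(2^{c_0}-1) < 2^d-1$, using Proposition~\ref{p:maximals}.

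The main obstacle is the case $k = 13$, and to a lesser extent $k = 12$: here one must certify that the list of class-$\mathcal{S}$ maximal subgroups obtained from \cite{HissMalle, Lub} is complete — the same kind of analysis as in the proof of Proposition~\ref{p:o18} — and then confirm that none of these subgroups, nor any of the geometric maximals such as ${\rm Sp}_{12}(2)$ and $\O_{12}^{\pm}(2)$ when $k = 12$, has an unexpectedly large fixed space on $\wedge^3 W$. Since $|\mathcal{M}(H)|$ is of order roughly $2^{120}$ for $k = 12$ and $2^{150}$ for $k = 13$, a single poor bound on some $\dim{\rm fix}(M)$ could in principle spoil the estimate, so these dimensions have to be determined precisely; constructing and working with the $\binom{k}{3}$-dimensional module $V$ for each maximal subgroup is also a nontrivial computational task at the top of the range.
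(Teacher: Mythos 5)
Your proposal sits in the same overall framework as the paper (bound $f(H)$ and invoke Lemma \ref{l:bound}, with {\sc Magma} doing the heavy lifting), and for the smaller ranks it is literally the paper's method: the paper computes $f({\rm L}_k(2))$ exactly, exactly as in Proposition \ref{p:burnside}, for $7 \leqs k \leqs 10$. Where you diverge is in how the fixed point spaces are controlled at the top of the range. The paper gives a uniform argument for all $8 \leqs k \leqs 13$: it checks in {\sc Magma} that every maximal subgroup of $H = {\rm L}_k(2)$ contains an element $x$ of order $r \in \{7,11,13\}$, computes eigenvalues of such $x$ on $W$ (viewed as diagonal matrices over an extension field) to get $\dim C_V(x) \leqs \binom{k-3}{3}+1$, and then combines ${\rm fix}(M) \subseteq C_V(x)$ with the bound $|\mathcal{M}(H)| \leqs \a(H_0)$ from Proposition \ref{p:maximals}; this avoids any subgroup-by-subgroup analysis of fixed spaces and any completeness argument for the class-$\mathcal{S}$ subgroups beyond what Proposition \ref{p:maximals} already encodes. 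Your route instead bounds parabolics via the filtration of $\Lambda^3 W$ (your conclusion $\dim {\rm fix}(P_m) \leqs 1$, with equality iff $m=3$, is correct — it is Smith's theorem plus the Levi action) and proposes exact computation, or trivial-composition-factor bounds, for the irreducible maximals. This is workable in principle (the paper itself constructed all classes of maximal subgroups of ${\rm L}_k(2)$, $k \leqs 14$, for Proposition \ref{p:maximals}), but note two costs relative to the paper's argument: your constant $c_0$ for irreducible maximal subgroups is never actually established, and it is genuinely load-bearing if you want to skip the exact computation — e.g.\ for $k=8$ one needs roughly $c_0 \leqs 22$ against $\a({\rm L}_8(2)) \approx 10^{10}$, so ``inspecting the possibilities'' must be replaced by a concrete restriction argument for each geometric and class-$\mathcal{S}$ candidate; and your fallback of computing $\dim {\rm fix}(M)$ for every class representative at $k=12,13$ is a substantially heavier computation than the paper's single eigenvalue calculation per relevant prime order. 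So: same skeleton, different (and more laborious, though viable) key step; the paper's order-$7/11/13$ eigenspace trick is precisely the device that lets it dispense with everything you flag as the ``main obstacle''.
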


\begin{proof}
Here $H_0 = L_{k}(2)$ and $V = L(\l_3) = \Lambda^3W$, where $7 \leqs k \leqs 13$ and $W$ is the natural module for $H_0$. Since the highest weight $\l_3$ is not invariant under a graph automorphism of $H_0$, it follows that $H = {\rm L}_{k}(2)$. The cases with $7 \leqs k \leqs 10$ can be handled by proceeding as in the proof of Proposition \ref{p:burnside} and we find that 
\[
f({\rm L}_{7}(2)) = 11811,\; f({\rm L}_{8}(2)) = 97155,\; f({\rm L}_{9}(2)) = 18202348610724300355,
\]
\[
f({\rm L}_{10}(2)) = 413104411638650042899395.
\]
However, we will give a uniform argument for all $8 \leqs k \leqs 13$.

With the aid of {\sc Magma}, it is easy to verify that each maximal subgroup $M$ of $H$ contains an element of order $r \in \{7,11,13\}$. Since $V$ is simply the wedge-cube of $W$, it is straightforward to calculate 
$\dim C_V(x)$ for each element $x \in H$ of order $r$. 

For example, suppose $H = {\rm L}_{8}(2)$ and $x \in H$ has order $7$. Now $H$ has three conjugacy classes of elements of order $7$, with representatives
\[
x_1 = [I_5, \omega, \omega^2,\omega^4],\; x_2 = [I_5, \omega^3, \omega^5,\omega^6],\; x_3 = [I_2, \omega, \omega^2,\omega^3, \omega^4, \omega^5,\omega^6],
\]
where $\omega \in \mathbb{F}_{2^3}$ is a primitive $7$-th root of unity (see \cite[Section 3.2]{BG}, for example). Notice that we are viewing the conjugacy class representatives as diagonal matrices in ${\rm SL}_{8}(8)$, which is convenient for computing their $1$-eigenspaces on $V$. By considering the eigenvalues of $x_i$ on $W$, it is easy to show that $\dim C_V(x_1) = \dim C_V(x_2) = \binom{5}{3}+1 = 11$ and $\dim C_V(x_3) = 8$.

In this way, we find that $\dim C_V(x) \leqs \binom{k-3}{3}+1$ for all $x \in H$ of order $r \in \{7,11,13\}$, with equality if and only if $r = 7$ and $\dim C_W(x) = k-3$. Therefore
\[
f(H) \leqs (2^{\binom{k-3}{3}+1}-1)\cdot |\mathcal{M}(H)|
\]
and by applying the bound $|\mathcal{M}(H)| \leqs \a(H_0)$ in Proposition \ref{p:maximals}, we deduce that $f(H) < 2^d-1$ for $8 \leqs k \leqs 13$.
\end{proof}

\begin{prop}\label{p:spin1}
If $G = V{:}H$ is an affine group in (c), then $G$ is not extremely primitive.
\end{prop}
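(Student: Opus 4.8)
The plan is to follow the same strategy as in the proof of Proposition \ref{p:wedge}: rather than enumerating all maximal subgroups of $H$ individually, I would find a small set of element orders $r$ such that every maximal subgroup of $H$ contains an element of order $r$, bound $\dim C_V(x)$ uniformly over all such elements, and then feed the resulting bound on $f(H)$ into Lemma \ref{l:bound} (using the counts $|\mathcal{M}(H)| \leqs \a(H_0)$ from Proposition \ref{p:maximals} and Corollary \ref{c:o18} for the relevant classical groups). Here $H = {\rm Sp}_{2k}(2)$ or $\O_{2k+2}^+(2)$ with $5 \leqs k \leqs 7$, $d = 2^k$, and $V = L(\l_k)$ is the spin module; note that in both families the highest weight $\l_k$ is fixed by any graph automorphism of $H_0$, so for $\O_{2k+2}^+(2)$ one must allow $H = H_0$, $H_0.2$ or $H_0.\mathrm{S}_3$ (when $k+1=4$), which is exactly what the counts $\a(H_0)$ in Table \ref{tab:max} already account for (the relevant rows being ${\rm Sp}_{10}(2)$, ${\rm Sp}_{12}(2)$ handled earlier, ${\rm Sp}_{16}(2)$, $\O_{8}^{\pm}(2)$, $\O_{18}^{+}(2)$, and $\O_{12}^+(2)$, $\O_{14}^+(2)$ which would need adding to the table).

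First I would, with {\sc Magma}, check for each relevant $H$ that a suitable set of prime (or prime-power) element orders $r$ meets every conjugacy class of maximal subgroups — for instance large primitive prime divisors of $2^{2k}-1$ or $2^{2k\pm 1}-1$ forcing the containing subgroup to lie in one of a few Aschbacher classes, supplemented by a couple of small orders to cover the remaining geometric and non-geometric maximals. Next, for each such element $x$ of order $r$, I would compute $\dim C_V(x)$ on the spin module directly: embedding $x$ in a maximal torus of ${\rm Spin}_{2k+1}$ (resp.\ ${\rm Spin}_{2k+2}^+$), its eigenvalues on the natural module determine a multiset of $\pm$-sign choices, and the eigenvalues of $x$ on the $2^k$-dimensional spin module are the $2^k$ products $\prod_i \xi_i^{\pm 1}$; the fixed space dimension is then the number of such products equal to $1$, which one bounds uniformly in terms of $\dim C_W(x)$ on the natural module $W$, much as $\dim C_{\Lambda^3 W}(x)$ was bounded by $\binom{k-3}{3}+1$ in the wedge case. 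This yields $f(H) \leqs (2^{m}-1)\cdot|\mathcal{M}(H)|$ for an explicit exponent $m$ well below $d/2 = 2^{k-1}$, and combining with the $\a(H_0)$ bounds gives $f(H) < 2^d - 1$; Lemma \ref{l:bound} then shows $G$ is not extremely primitive.

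The main obstacle I anticipate is twofold. First, the spin module eigenvalue bookkeeping is less transparent than the wedge-cube case: an element with a large fixed space on $W$ need not have a correspondingly large fixed space on $L(\l_k)$ (and vice versa), so some care is needed to identify which element orders simultaneously hit all maximal subgroups \emph{and} have provably small spin fixed spaces — it may be cleanest to just compute $\dim C_V(x)$ for a representative of every class of elements of order $r$ in {\sc Magma} rather than arguing by hand. Second, for $\O_{2k+2}^+(2)$ with $k+1 = 4$, i.e.\ $\O_8^+(2)$, triality enlarges $\mathrm{Out}(H_0)$ and forces care with which almost simple $H$ actually arise acting on $V = L(\l_3)$ (the spin module for $B_3$-type data inside $D_4$); however $\O_8^\pm(2)$ with $V = L(\l_1+\l_3)$ was already dealt with in Proposition \ref{p:class1}, and the $k=4$ case $\O_{10}^+(2)$ does not occur in (c), so in practice the list reduces to $H_0 \in \{{\rm Sp}_{10}(2), {\rm Sp}_{12}(2), {\rm Sp}_{14}(2), \O_{12}^+(2), \O_{14}^+(2), \O_{16}^+(2)\}$ — of which $\O_{14}^+(2)$, ${\rm Sp}_{14}(2)$ and $\O_{12}^+(2)$ still need their maximal-subgroup counts, obtainable by the same \texttt{ClassicalMaximals} computation used for $\O_{18}^+(2)$ in Corollary \ref{c:o18} once one checks (via \cite{Lub}, \cite{HissMalle}) that no non-geometric maximals intervene.
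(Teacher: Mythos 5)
Your general strategy (element orders meeting every maximal subgroup, eigenvalue bookkeeping on the spin module, then a bound of the form $f(H) \leqs (2^m-1)\cdot|\mathcal{M}(H)|$) is essentially how the paper treats $H = \O_{16}^{+}(2)$, and direct {\sc Magma} computation of $f(H)$ handles the smaller groups, as you suggest. But there is a genuine gap for the rank-$7$ groups in the middle of the list, most sharply for $H = \O_{14}^{+}(2)$, where $d = 64$: this group has a class of maximal irreducible subgroups $L = {\rm L}_{2}(13)$ of size $|H:L| \approx 1.5 \times 10^{24}$, which already exceeds $2^{64}-1 \approx 1.8\times 10^{19}$. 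Hence no bound of the shape $(2^m-1)\cdot|\mathcal{M}(H)|$ with $m \geqs 1$ can beat $2^d-1$, no matter how cleverly you choose element orders. Worse, single-element eigenspace estimates cannot rescue this: every element of ${\rm L}_{2}(13)$ of order $7$ or $13$ has a nonzero $1$-eigenspace on the $64$-dimensional spin module, so your method only yields $\dim {\rm fix}(L) \leqs m$ for some $m \geqs 1$, whereas what is needed is the exact vanishing ${\rm fix}(L) = 0$. The paper gets this for free from Lemma \ref{l:bound} itself (if ${\rm fix}(L) \neq 0$ then $f(H) \geqs |H:L| > 2^{64}-1$, a contradiction), and then applies the order-$7$ eigenspace bounds only to the remaining (reducible) maximal subgroups, whose number is about $4 \times 10^{14}$. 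A similar issue arises for $H = {\rm Sp}_{14}(2)$: the field-extension subgroups ${\rm Sp}_{2}(2^7).7$ account for about $1.9\times 10^{24}$ of the roughly $1.9\times 10^{24} + 4\times 10^{14}$ maximal subgroups, and the paper disposes of them by showing they act irreducibly on $V$ (the spin module restricted to $A_1^7$ is the tensor product of the natural $2$-dimensional modules), so ${\rm fix}(M)=0$, after which the trivial bound $2^{64}$ suffices for the rest. Your proposal contains no mechanism for proving ${\rm fix}(M)=0$ for these enormous classes of irreducible maximal subgroups, and without it the stated inequalities do not go through.

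Two smaller corrections. First, for $H_0 = \O_{2k+2}^{+}(2)$ the half-spin highest weight $\l_k$ is \emph{not} fixed by the graph automorphism of $D_{k+1}$ (it is interchanged with $\l_{k+1}$), so $H = H_0$ and no groups $H_0.2$ (nor triality extensions, which in any case only concern $k+1=4$, outside case (c)) need to be considered; your claim to the contrary would only create superfluous cases but signals a misreading of the setup. Second, ${\rm Sp}_{16}(2)$ and $\O_{18}^{+}(2)$ (hence Corollary \ref{c:o18}) belong to the $k=8$ cases already eliminated in Proposition \ref{p:class1}, not to case (c); the groups you do still need maximal subgroup data for are ${\rm Sp}_{14}(2)$, $\O_{12}^{+}(2)$ and $\O_{14}^{+}(2)$, and there your suggested \texttt{ClassicalMaximals} computation (checking via \cite{Lub, HissMalle} which irreducible subgroups occur) is indeed what the paper uses — but, as explained above, the resulting counts must be split by conjugacy class rather than fed into a single uniform bound.
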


\begin{proof}
Here $H = {\rm Sp}_{2k}(2)$ or $\O_{2k+2}^{+}(2)$ and $V = L(\l_k)$ is a spin module with $5 \leqs k \leqs 7$ (note that if $H_0 = \O_{2k+2}^{+}(2)$ then the highest weight of $V$ is not stable under graph automorphisms, so $H = H_0$). 

First assume $H = {\rm Sp}_{2k}(2)$. For $k=5$ and $6$ we can compute $f(H)$ by proceeding as in the proof of Proposition \ref{p:burnside}; we get 
\[
f({\rm Sp}_{10}(2)) = 75735,\; f({\rm Sp}_{12}(2)) = 4922775
\]
and the result follows from Lemma \ref{l:bound}. Now assume $H = {\rm Sp}_{14}(2)$ and fix a maximal field extension subgroup $L = {\rm Sp}_{2}(2^7).7$. Write $\mathcal{M}(H) = \mathcal{M}_1 \cup \mathcal{M}_2$, where $\mathcal{M}_1$ is the set of $H$-conjugates of $L$. Using {\sc Magma} (with the \texttt{ClassicalMaximals} command) we see that 
\[
|\mathcal{M}_1| = 1902762402163023937536000, \;\;  |\mathcal{M}_2| = 407915701794349.
\]

Let $\bar{H} = C_7$ be the ambient simple algebraic group over the algebraic closure $k=\bar{\mathbb{F}}_{2}$ and fix a Steinberg endomorphism of $\bar{H}$ such that $\bar{H}_{\s} = H$. We may choose $\s$ so that $\bar{L}_{\s} = L$, where $\bar{L}$ is a maximal rank subgroup of $\bar{H}$ with connected component $\bar{L}^0 = A_1^7$ (that is, $\bar{L}^0$ is a central product of $7$ copies of $A_1 = {\rm SL}_{2}(k)$). Now the restriction of the spin module for $\bar{H}$ to $\bar{L}^0$ is the tensor product of the natural modules for the $A_1$ factors. In particular, the restriction is irreducible and we deduce that $L$ acts irreducibly on $V$. Therefore, ${\rm fix}(M) = 0$ for all $M \in \mathcal{M}_1$ and thus
\[
f(H)  = \sum_{M \in \mathcal{M}_2}(|{\rm fix}(M)|-1) \leqs (2^{64}-1)\cdot |\mathcal{M}_2| < 2^{128}-1
\]
since the trivial bound $\dim {\rm fix}(M) \leqs 2^{64}$ holds for all $M \in \mathcal{M}_2$. By applying 
Lemma \ref{l:bound}, we conclude that $G$ is not extremely primitive.

For the remainder of the proof, we may assume that $H = \O_{2k+2}^{+}(2)$ with $5 \leqs k \leqs 7$. The case $k=5$ can be handled as in Proposition \ref{p:burnside} (in order to construct $V$ using the command \texttt{IrreducibleModulesBurnside}, we need to set \texttt{DimLim} equal to $10000$) and we get $f(H) = 1240917975<2^{32}-1$. 

Next assume $k=6$, so $H = \O_{14}^{+}(2)$. Let $W$ be the natural module for $H$ and let $\mathcal{M}_1$ be the set of maximal subgroups of $H$ of one of the following types:
\[
P_1, \; P_3, \; P_4, \; P_7 \, \mbox{(both classes)}, \; {\rm O}_{8}^{+}(2) \times {\rm O}_{6}^{+}(2),\; {\rm Sp}_{12}(2),
\]
where $P_m$ denotes the stabilizer in $H$ of a totally singular $m$-dimensional subspace of $W$. In addition, let $\mathcal{M}_2$ be the remaining reducible maximal subgroups of $H$ and write $\mathcal{M}_3$ for the set of $H$-conjugates of a fixed irreducible subgroup $L = {\rm L}_{2}(13)$. With the aid of {\sc Magma}, it is easy to check that 
\[
|\mathcal{M}_1| = 240862567876011, \; |\mathcal{M}_2| = 166862538433514
\]
and $\mathcal{M}(H) = \mathcal{M}_1 \cup \mathcal{M}_2 \cup \mathcal{M}_3$.

Now, $|\mathcal{M}_3| = |H:L|>2^{64}-1$, so Lemma \ref{l:bound} implies that ${\rm fix}(L)=0$ and thus
\[
f(H) = \sum_{M \in \mathcal{M}_1} (|{\rm fix}(M)|-1) + \sum_{M \in \mathcal{M}_2} (|{\rm fix}(M)|-1).
\]
There are two conjugacy classes of elements of order $7$ in $H$ and both classes have representatives in a reducible subgroup $K = {\rm Sp}_{12}(2)$. In order to compute the $1$-eigenspaces of these elements on $V$, it is convenient to work in the corresponding algebraic groups over 
$\bar{\mathbb{F}}_2$, so write $\bar{H} = D_7$ and $\bar{K} = B_6$. Then the two classes of order $7$ in $H$ are represented by the elements 
\[
x_1 = [I_6, \omega, \omega^2, \omega^3, \omega^4,\omega^5,\omega^6],\;\; x_2 = [\omega I_2, \omega^2I_2, \omega^3I_2, \omega^4I_2,\omega^5I_2,\omega^6I_2]
\]
in $\bar{K}$, where $\omega \in \bar{\mathbb{F}}_2$ is a primitive $7$-th root of unity. Let $\bar{V} = V \otimes \bar{\mathbb{F}}_2$ be the spin module for $\bar{H}$, which remains irreducible on restriction to a maximal rank subgroup $\bar{J} = A_1^6$ of $\bar{K}$; the restriction is the tensor product of the natural $2$-dimensional modules for the $A_1$ factors of $\bar{J}$. This allows us to compute $\dim C_V(x_i)$ very easily. For example, the action of $x_1$ on $\bar{V}$ is given by  
\[
[I_2] \otimes [I_2] \otimes [I_2] \otimes [\omega, \omega^6] \otimes [\omega^2, \omega^5] \otimes [\omega^3, \omega^4] = [I_{16},\omega I_8, \ldots, \omega^6I_8]
\]
and thus $\dim C_V(x_1) = 16$. Similarly, $\dim C_V(x_2) = 10$.

Using {\sc Magma}, it is easy to check that each subgroup $M \in \mathcal{M}_1 \cup \mathcal{M}_2$ contains an element of order $7$. Moreover, each $M \in \mathcal{M}_1$ contains a conjugate of $x_2$. It follows that
\[
f(H) \leqs (2^{10}-1) \cdot |\mathcal{M}_1| + (2^{16}-1) \cdot |\mathcal{M}_2| = 11181738863177499243 < 2^{64}-1
\]
and we deduce that $G$ is not extremely primitive.

Finally, let us assume $k=7$, so $H = \O_{16}^{+}(2)$. Let $W$ be the natural module for $H$. We claim that $\dim {\rm fix}(M) \leqs 32$ for all $M \in \mathcal{M}(H)$. Given the claim, together with the bound on $|\mathcal{M}(H)|$ in Proposition \ref{p:maximals}, it follows that
\[
f(H) \leqs (2^{32}-1) \cdot |\mathcal{M}(H)| < 2^{128}-1
\] 
and the proof is complete. So it remains to justify the claim.

As in the previous case, $H$ has two conjugacy classes of elements of order $7$, represented by elements $x_1,x_2$ in a reducible subgroup ${\rm Sp}_{14}(2)$, where $\dim C_W(x_1) = 10$ and $\dim C_W(x_2) = 4$. By arguing as in the previous case, we calculate that $\dim C_V(x_1) = 32$ and $\dim C_V(x_2) = 20$. Similarly, there is an element $x_3 \in {\rm Sp}_{14}(2)$ of order $5$ with $C_W(x_3) = 4$ and $\dim C_V(x_3) = 24$. By constructing representatives of the maximal subgroups of $H$ in {\sc Magma} (using \texttt{ClassicalMaximals}), it is straightforward to check that each $M \in \mathcal{M}(H)$ contains an element conjugate to either $x_1$, $x_2$ or $x_3$ (indeed, the order of every maximal subgroup of $H$ is divisible by $7$, apart from the subgroups of type ${\rm O}_{4}^{-}(2) \wr {\rm Sym}_4$). This justifies the claim and the proof of the proposition is complete.
\end{proof}

\section{Proof of Theorem \ref{t:main}: $H$ exceptional}

In this final section we complete the proof of Theorem \ref{t:main} by handling the remaining cases in Table \ref{tab:cases} with $H_0$ an exceptional group of Lie type.

\begin{prop}\label{p:exceptional}
Let $G = V{:}H$ be an affine group in Table \ref{tab:cases} with $H_0$ an exceptional group of Lie type. Then $G$ is not extremely primitive.
\end{prop}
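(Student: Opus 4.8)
The strategy mirrors the classical cases: for each pair $(H_0, V)$ in the exceptional block of Table~\ref{tab:cases}, I will bound $f(H)$ from above and check that $f(H) < 2^d-1$, so that Lemma~\ref{l:bound} rules out extreme primitivity. The cases $E_6(2)$ with $V = L(\lambda_1)$ ($d=27$), ${}^2E_6(2)$ and $E_6(2)$ with $V = L(\lambda_2)$ ($d=78$) are small enough that a direct {\sc Magma} computation — constructing $V$ and representatives of the maximal subgroups of $H$, then summing $(|{\rm fix}(M)|-1)$ exactly as in Proposition~\ref{p:burnside} — should give $f(H)$ on the nose; here one must be mildly careful about which almost simple group $H$ is, i.e.\ whether the relevant highest weight is stable under a graph automorphism of $H_0$ (for $E_6$, $\lambda_1$ and $\lambda_2$ behave differently under the order-2 graph automorphism, and for ${}^2E_6$ there is no extra graph automorphism to worry about). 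The remaining four rows — $E_7(2)$ with $L(\lambda_1)$ ($d=56$) and with $L(\lambda_7)$ ($d=132$), and $E_8(2)$ with $L(\lambda_1)$ ($d=248$) — are where the work lies, since $|\mathcal{M}(H)|$ is either too large to compute cheaply or, for $E_8(2)$, not even fully known.

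For $E_7(2)$ the approach I would take is the eigenvalue/torus argument used in Propositions~\ref{p:wedge} and~\ref{p:spin1}: pass to the ambient algebraic group $\bar H$ of type $E_7$ over $\bar{\mathbb{F}}_2$ with $\bar H_\sigma = H$, pick a convenient semisimple element $x$ of small prime order $r$ (for instance $r$ dividing $|H|$ with $x$ lying in a maximal-rank subgroup whose restriction of $V$ is easy to decompose — a product of $A_1$'s or a Levi for which the weights of $L(\lambda_1)$ or $L(\lambda_7)$ are tabulated), and compute $\dim C_V(x)$ by decomposing $V\otimes\bar{\mathbb{F}}_2$ under a maximal torus. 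One then verifies in {\sc Magma}, using \texttt{MaximalSubgroups} or the known list of maximal subgroups of $E_7(2)$ and $E_7(2).2$ (Liebeck--Seitz type results, or the stored data), that every maximal subgroup of $H$ contains a conjugate of such an $x$; equivalently that every maximal subgroup has order divisible by the chosen prime $r$ (or by one of a short list of primes, as in the $\mathrm{O}_{16}^+(2)$ case). This yields $\dim{\rm fix}(M) \leqs e$ for all $M$ for some explicit $e$, and then $f(H) \leqs (2^e-1)\cdot|\mathcal{M}(H)|$; combined with a bound on $|\mathcal{M}(H)|$ (computed directly, or bounded crudely using the order of $E_7(2)$ and Wall-type estimates, which in fact the paper is willing to invoke) this should comfortably beat $2^{56}-1$ and $2^{132}-1$. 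As in Proposition~\ref{p:spin1}, large-index irreducible maximal subgroups $L$ can be discarded outright: if $|H:L| > 2^{\lfloor d/2\rfloor}-1$ then Lemma~\ref{l:bound} forces ${\rm fix}(L)=0$, so only the reducible maximal subgroups and a bounded list of small irreducible ones need the eigenspace analysis.

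The genuine obstacle is $E_8(2)$ with $d=248$, precisely because, as the text flags, the maximal subgroups of $E_8(2)$ are not classified, so neither Corollary~\ref{c:basic} nor a direct $f(H)$ computation is available. Here I would avoid $|\mathcal{M}(H)|$ entirely and instead exploit the extreme-primitivity condition structurally: if $G$ were extremely primitive, then by Lemma~\ref{l:bound} the fixed spaces $\{{\rm fix}(M) : M\in\mathcal{M}(H)\}$ would have to partition $V\setminus\{0\}$, which is extremely restrictive. Concretely, every nonzero $v\in V$ lies in a unique ${\rm fix}(M)$, namely $M = H_v$ must itself be maximal in $H$ for all $v\neq 0$; so one can derive a contradiction by producing two distinct nonzero vectors $v,w$ whose stabilizers are nested or whose stabilizers are not maximal — for example, by exhibiting a nonidentity semisimple or unipotent element $x\in H$ whose fixed space on $V = L(\lambda_1)$ (the $248$-dimensional adjoint module, so $C_V(x) = \mathfrak{c}_{\mathfrak{e}_8}(x)$, a centralizer in the Lie algebra, whose dimension is classically known) has dimension $d_1$, and a second element $y$ with $x\in\langle y\rangle$-analogue giving strict containment $\langle y\rangle \lneq \langle x, \ldots\rangle$ of point stabilizers, contradicting maximality; alternatively one uses that $H_v$ for $v$ a highest weight vector is a maximal-rank or parabolic-type subgroup which visibly fails to be maximal, or that some $H_v$ lies inside a reductive overgroup coming from a subsystem subgroup. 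I expect this last case to require the most care, and it is the natural place for the authors' proof to bring in specific facts about $\mathfrak{e}_8$ over $\mathbb{F}_2$ — centralizer dimensions, the orbit structure of $E_8(2)$ on its adjoint module, and the containments among the reductive subgroups — rather than a black-box {\sc Magma} run.
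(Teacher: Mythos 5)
Your structural idea for $E_8(2)$ --- that extreme primitivity would force the stabilizer $H_v$ of every nonzero $v \in V$ to be maximal in $H$, so it suffices to exhibit a single vector with non-maximal stabilizer --- is precisely the paper's argument, and in fact the paper applies it uniformly to \emph{all five} exceptional rows, never computing $f(H)$ or $|\mathcal{M}(H)|$ at all: for $d=27$ a vector with stabilizer $2^{16}.{\rm Sp}_8(2)$ (Cohen--Cooperstein), non-maximal by Kleidman--Wilson; for $d=56$ a vector with stabilizer $2^{26}.F_4(2)$ (Liebeck--Saxl), non-maximal by the Ballantyne--Bates--Rowley list; and for $d=78,132,248$ a representative $v \in V$ of the nilpotent orbit labelled $A_1^2$ in the adjoint module, which Holt--Spaltenstein guarantee can be chosen over $\mathbb{F}_2$, with $C_{\bar H}(v)$ read off from Liebeck--Seitz. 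This makes your proposed {\sc Magma} computations for $E_6(2)$, ${}^2E_6(2)$ and the eigenvalue analysis for $E_7(2)$ unnecessary; they are also heavier than you suggest, since \texttt{MaximalSubgroups} is not available for groups of this order and one would have to assemble class representatives and class lengths by hand from the known classifications.

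The genuine gap is that your $E_8(2)$ case is left as a list of tactics, and the one concrete vector you name does not work. The stabilizer of a highest weight vector of $L(\lambda_1)$ (a representative of the minimal nilpotent orbit) is $U_{57}E_7$ in the algebraic group; over $\mathbb{F}_2$ the one-dimensional torus in the Levi $E_7T_1$ contributes trivially, so $C_H(v) = 2^{57}{:}E_7(2)$ is exactly the maximal parabolic $P_8$ of $E_8(2)$ --- no contradiction. One really does need a more carefully chosen orbit: the paper takes the $A_1^2$ orbit, whose centralizer $U_{78}B_6$ lies properly inside the parabolic $P_1$ with Levi of type $D_7T_1$, and since ${\rm Sp}_{12}(2) < \Omega_{14}^{+}(2)$ is proper the containment survives taking $\sigma$-fixed points, so $C_H(v)$ is non-maximal. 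One also has to check the points your sketch passes over: that the orbit representative can be taken inside $V$ and over the prime field (Holt--Spaltenstein, plus for $E_7$ the observation that the $132$-dimensional composition factor of the adjoint module meets every nilpotent orbit), and that $C_{\bar H}(v)$ is $\sigma$-stable (it is the unique nilpotent centralizer of its dimension), so that $C_H(v) = (C_{\bar H}(v))_\sigma$. Finally, for $E_6^{\e}(2)$ the centralizer $2^{24}|{\rm Sp}_6(2)|$ is ruled out not by a parabolic containment but by comparing orders against the Kleidman--Wilson and Wilson lists of maximal subgroups, a step your outline would still need in some form. Without a working choice of $v$, the $E_8(2)$ case --- the one you rightly identify as the crux, since $\mathcal{M}(E_8(2))$ is unknown --- remains open in your proposal.
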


\begin{proof}
In each case we will demonstrate the existence of a nonzero vector $v \in V$ such that the point stabilizer $C_H(v)$ is a non-maximal subgroup of $H$.

First assume $(d,H_0) = (27, E_6(2))$, so $V = L(\l_1)$ or $L(\l_6)$ is one of the minimal modules for $H_0$. Since the highest weight of $V$ is not invariant under a graph automorphism of $H_0$, it follows that $H = H_0$. 
By \cite[p.467]{CC}, there exists $v \in V$ such that $C_{H}(v) = 2^{16}.\text{Sp}_8(2)$, which is a non-maximal subgroup of $H$ by \cite{KW}. Similarly, if $(d,H_0) = (56, E_7(2))$ then $H = H_0$ and \cite[Lemma 4.3]{LieSaxl} implies that there exists $v \in V$ with $C_{H}(v) = 2^{26}.F_4(2)$. By inspecting \cite{BBR}, we see that $C_H(v)$ is non-maximal in $H$.

In the final three cases, $V$ is the unique nontrivial composition factor of the adjoint module for $H_0$ (note that the adjoint module is irreducible when $H_0 = E_6^{\e}(2)$ or $E_8(2)$, but there are two composition factors if $H_0 = E_7(2)$). Write $H_0 = (\bar{H}_{\s})'$, where $\bar{H}$ is a simple algebraic group of adjoint type over $\bar{\mathbb{F}}_2$ and $\s$ is an appropriate Steinberg endomorphism of $\bar{H}$. Let $\mathcal{L}(\bar{H})$ be the adjoint module for $\bar{H}$, which is simply the Lie algebra of $\bar{H}$ equipped with the adjoint action of $\bar{H}$, and note that we may view $V$ as a subset of $\mathcal{L}(\bar{H})$. Recall that the orbits for the action of $\bar{H}$ on the set of nilpotent elements of $\mathcal{L}(\bar{H})$ are called nilpotent orbits. 

If $\bar{H}=E_7$, then the adjoint module $\mathcal{L}(\bar{H})$ has a unique nontrivial composition factor $\bar{V}$ and it will be important to note that every nilpotent orbit of $\bar{H}$ has a representative in $\bar{V}$. Since we are working in even characteristic, we may assume that $\bar{H}$ is simply connected and we see that $\mathcal{L}(\bar{H})$ has a $1$-dimensional centre, which is generated by a semisimple element. Therefore the $132$-dimensional quotient $\bar{V}$ contains a representative of every nilpotent orbit as claimed.

It follows from \cite[Section 1]{HS} that every nilpotent orbit on $\mathcal{L}(\bar{H})$ has a representative defined over the prime field $\mathbb{F}_2$. Therefore, in every case we may choose $v \in V$ to be a representative of the nilpotent orbit labelled $A_1^2$ in \cite[Tables 22.1.1--22.1.3]{LieS}, which also gives the structure of the stabilizer $C_{\bar{H}}(v)$. Moreover, $C_{\bar{H}}(v)$ is $\s$-stable because it is the only stabilizer of a nilpotent element with its given dimension. Therefore, $C_{\bar{H}_\s}(v) = (C_{\bar{H}}(v))_\s$. 

First assume $\bar{H} = E_7$ or $E_8$. Here $H = \bar{H}_\s$, so 
$C_{H}(v) = (C_{\bar{H}}(v))_\s$ and by inspecting \cite{LieS} we see that 
\[
C_{\bar{H}}(v) = \left\{\begin{array}{ll}
U_{42} B_4 A_1 & \mbox{if $\bar{H} = E_7$} \\
U_{78} B_6 & \mbox{if $\bar{H} = E_8$,}
\end{array}\right.
\]
where $U_i$ denotes a connected unipotent algebraic group of dimension $i$. In particular, $C_{\bar{H}}(v)$ is a proper subgroup of a $\s$-stable maximal parabolic subgroup of $\bar{H}$, whence $C_{H}(v)$ is non-maximal in $H$. 

Finally, let us assume $\bar{H} = E_6$ and $H_0 = E_6^{\e}(2)$. Here we get 
\[
|C_{\bar{H}_\s}(v)| = 2^{24}|{\rm Sp}_{6}(2)|.(2-\e)
\]
and thus $|C_{H_0}(v)| = 2^{24}|{\rm Sp}_{6}(2)|$. By appealing to \cite{KW, Wil2}, we see there is no maximal subgroup $M$ of $H$ such that $|M \cap H_0| = |C_{H_0}(v)|$. Therefore $C_{H}(v)$ is non-maximal and the proof is complete.
\end{proof}

\end{document}